\documentclass[12pt,twoside,reqno]{amsart}
\usepackage[colorlinks=true,citecolor=blue]{hyperref}
\usepackage{textcase}
\usepackage{amsthm,amsmath,amssymb,yhmath,mathrsfs}
\usepackage{graphicx}
 \usepackage{titlesec}
\titleformat{\section}
{\centering\normalfont\bfseries}{\thesection.}{0.5em}{}
\AtBeginDocument{}
\newtheoremstyle{mystyle}
  {\topsep}
  {\topsep}
  {\normalfont}  {}
  {\normalfont\scshape}  {}  {.5em}  {}
\theoremstyle{mystyle}
\newtheorem{theorem}{Theorem}[section]
\newtheorem{corollary}{Corollary}[section]
\newtheorem{lemma}{Lemma}[section]

\newtheorem{remark}{Remark}[section]
\newtheorem{ques}{Question}[section]

\numberwithin{equation}{section}
\begin{document}
\setcounter{page}{1}

\title[A counterexample to Reich's fixed point problem]
{A counterexample to Reich's fixed point problem}
\author[  L. V. Nguyen ]{ Luong Van  Nguyen}
\maketitle
\vspace*{-0.6cm}

\begin{center}
{\footnotesize {\it
Faculty of Natural Sciences, Hong Duc University \\
565 Quang Trung Street, Hac Thanh Ward, Thanh Hoa Province, Vietnam\\
Email: nguyenvanluong@hdu.edu.vn; luonghdu@gmail.com 

}}\end{center}

\vskip 4mm {\footnotesize \noindent {\bf Abstract.} This note  provides a negative answer to the well-known Reich's problem in fixed point theory.

\vskip 1mm \noindent {\bf Keywords.} Reich's fixed point problem, Fixed points, Multi-valued mappings

\noindent {\bf 2010 Mathematics Subject Classification.}   47H10, 54H25}

%
%

\section{Introduction and Preliminaries}

Let $(X,d)$ be a metric space. We denote by $P(X)$ the collection of all nonempty subsets of $X$, by $CB(X)$ the collection of all nonempty closed bounded subsets of $X$ and by $K(X)$ the collection of nonempty compact subsets of $X$. The diameter of a  set $A\in P(X)$ is defined by $\mathrm{diam}(A) := \sup\{d(x,y):x,y\in A\}$. The distance from a point $x\in X$ to a  set  $A\in P(X)$ is defined by
$$\mathrm{dist}(x,A) := \inf\{d(x,y): y\in A\}.$$
For nonempty subsets $A,B$ of $X$, the excces of $A$ over $B$ is defined by 
$$e(A,B): = \sup\{d(x,B):x\in A\}.$$
The Hausdorff distance $H$ defined on $CB(X)$ is given by
$$H(A,B) : = \max\{e(A,B), e(B,A)\},\quad A, B\in CB(X).$$
The Hausdorff distance induced by the usual metric on $\mathbb{R}$ is denoted by $H_{\mathbb{R}}$.

The following fixed point result for compact values multi-valued mapping  was proved by Reich \cite{R72} in 1972 .
\begin{theorem}[\cite{R72}] \label{THMR72}
	Let $(X,d)$ be a complete metric space and $T:X\to K(X)$ be a multi-valued mapping satisfying
	\begin{equation}
		\label{RC1}
		H(T(x),T(y)) \le \kappa(d(x,y))d(x,y),\quad x,y\in X, x\ne y,
	\end{equation}
	where $\kappa: (0,\infty)\to [0,1)$  is a function with the property
	\begin{equation}
		\label{RC2}
		\limsup_{s\to t^+}\kappa(s)<1\quad\text{ for all } t>0.
	\end{equation}
	Then, $T$ has a fixed point $x^*\in X$, i.e., $x^*\in T(x^*)$.
\end{theorem}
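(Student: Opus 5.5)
We would follow Reich's original iterative scheme, building a Picard-type sequence and using compactness of the values to realize distances exactly. Fix $x_0\in X$ and $x_1\in T(x_0)$; we may assume $x_{n}\notin T(x_n)$ for all $n$, since otherwise we are done. Because $T(x_{n})$ is compact, the distance $\mathrm{dist}(x_{n},T(x_{n}))$ is attained. Given $x_n\in T(x_{n-1})$, compactness of $T(x_n)$ yields $x_{n+1}\in T(x_n)$ with
\[
d(x_n,x_{n+1})=\mathrm{dist}(x_n,T(x_n))\le H(T(x_{n-1}),T(x_n))\le \kappa(d(x_{n-1},x_n))\,d(x_{n-1},x_n).
\]
Write $t_n:=d(x_n,x_{n+1})>0$. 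Since $\kappa<1$, the sequence $(t_n)$ is strictly decreasing, so it converges to some $t\ge 0$, and $t_n\to t$ from above.

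\textbf{Step 1: $t=0$.} Suppose $t>0$. By \eqref{RC2} there are $r<1$ and $\delta>0$ such that $\kappa(s)\le r$ for all $s\in(t,t+\delta)$. For large $n$ we have $t_n\in(t,t+\delta)$, hence $t_{n+1}\le r t_n$, so $t_n\to0$, a contradiction. Thus $t=0$.

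\textbf{Step 2: $(x_n)$ is Cauchy.} This is the main obstacle. The condition \eqref{RC2} controls $\kappa$ only near positive $t$, not near $0$, so $t_{n+1}\le\kappa(t_n)t_n$ alone does not give summability of $t_n$. We would argue by contradiction in the standard way. Suppose $(x_n)$ is not Cauchy. Then there exist $\varepsilon>0$ and indices $m_k>n_k\to\infty$ with $d(x_{n_k},x_{m_k})\ge\varepsilon$, with $m_k$ chosen minimal. Using $t_n\to0$ and the triangle inequality, we get $d(x_{n_k},x_{m_k})\to\varepsilon^+$ and also $d(x_{n_k+1},x_{m_k+1})\to\varepsilon$.

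The difficulty is that the Hausdorff estimate bounds only $\mathrm{dist}(x_{n_k+1},T(x_{m_k}))$, whereas $x_{m_k+1}$ was chosen as a nearest point to $x_{m_k}$, not to $x_{n_k+1}$. We would first try to handle this through the chain
\[
d(x_{n_k+1},x_{m_k+1})\le \mathrm{dist}(x_{n_k+1},T(x_{m_k}))+\mathrm{diam}\text{-type error},
\]
but that error is not obviously small. This is exactly where Reich's problem arises. If it fails, we would instead try $\kappa$ bounded away from $1$ on all of $(0,\infty)$, which is not available here.

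Given that the paper is titled as a counterexample to Reich's problem, the honest expectation is that Step 2 is precisely where the argument requires care. Reich's actual proof handles it using the full hypothesis \eqref{RC2} at every $t>0$. With $\varepsilon$ fixed, choose $r<1$ and $\delta>0$ so that $\kappa\le r$ on $(\varepsilon,\varepsilon+\delta)$, together with an inductive estimate on the tail. We would show that once $t_n<\eta$, for suitable small $\eta$ depending on $r$ and $\delta$, the ball $B(x_n,\varepsilon)$ is invariant along the sequence. Concretely, if $d(x_n,x_m)<\varepsilon+\eta$, then either $d(x_n,x_m)\le\varepsilon$, or $\kappa(d(x_n,x_m))\le r$, giving $d(x_{n+1},T(x_m))\le r(\varepsilon+\eta)$. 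From this we would deduce $d(x_n,x_{m+1})<\varepsilon+\eta$.

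\textbf{Step 3.} Completeness gives $x_n\to x^*$. Then
\[
\mathrm{dist}(x_{n+1},T(x^*))\le H(T(x_n),T(x^*))\le d(x_n,x^*)\to0,
\]
so $\mathrm{dist}(x^*,T(x^*))=0$. Since $T(x^*)$ is closed, $x^*\in T(x^*)$.
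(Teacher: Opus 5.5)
The paper quotes this theorem from Reich without proof, so your argument has to stand on its own. Steps 1 and 3 are correct, but Step 2 is a genuine gap, and the repair you sketch does not close it. From $\mathrm{dist}(x_{n+1},T(x_m))\le r(\varepsilon+\eta)$ nothing follows about $d(x_{n+1},x_{m+1})$ or $d(x_n,x_{m+1})$, because $x_{m+1}$ is the point of $T(x_m)$ nearest to $x_m$, not to $x_{n+1}$. That is the very obstruction you identified a paragraph earlier. In fact no argument of this type can work, because it uses compactness only to pick nearest points, and the example of Theorem~\ref{MRS} also has nearest points. There, with $\kappa(s)=1/(1+s)$ (which satisfies \eqref{RC2}), start at $x^{(0)}=0$ and put $x^{(k+1)}_n=(nx^{(k)}_n+1)/(n+1)$. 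Then $x^{(k+1)}\in T(x^{(k)})$, since $(n+1)x^{(k+1)}_n-nx^{(k)}_n=1$. Moreover $\|x^{(k+1)}-x^{(k)}\|_\infty=\sup_n(1-x^{(k)}_n)/(n+1)$, which by \eqref{ETH3} equals $\mathrm{dist}(x^{(k)},T(x^{(k)}))$. So this is exactly your nearest-point sequence, and $t_k\le 1/(ek)\to 0$, as Step 1 predicts. But $x^{(k)}_n=1-(n/(n+1))^k$, so $\|x^{(m)}-x^{(k)}\|_\infty\to 1$ as $m\to\infty$ for every $k$, and the sequence is not Cauchy. This is consistent with your Step 3, which works verbatim for closed values, and with the absence of fixed points. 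Hence compactness of the values must be used in an essential way, not merely to realize distances.

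One standard way to do this is to move the Cauchy argument to the complete hyperspace $(K(X),H)$ and the map $F(A)=\bigcup_{a\in A}T(a)$. Here $F(A)$ is compact because $T$ is $H$-continuous with compact values. Given $\varepsilon>0$, \eqref{RC2} at $t=\varepsilon$ together with $\kappa<1$ gives $\delta>0$ with $\kappa(s)s<\varepsilon$ for all $0<s<\varepsilon+\delta$. Suppose $H(A,B)<\varepsilon+\delta$ and $u\in T(a)$ with $a\in A$. Pick $b\in B$ with $d(a,b)\le H(A,B)$; then $\mathrm{dist}(u,F(B))\le H(T(a),T(b))<\varepsilon$, or it is $0$ if $a=b$. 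Since $F(A)$ is compact, the supremum defining $e(F(A),F(B))$ is attained, so $e(F(A),F(B))<\varepsilon$. Symmetrically $H(F(A),F(B))<\varepsilon$. Thus $F$ is a Meir--Keeler contraction, and the Meir--Keeler theorem yields $A^*\in K(X)$ with $T(A^*)=A^*$. Its proof is essentially your ball-invariance idea, now legitimate because the contractive estimate holds between the iterates $A_{n+1}=F(A_n)$ themselves. Finally, $x\mapsto\mathrm{dist}(x,T(x))$ is $2$-Lipschitz, so it attains its minimum on the compact set $A^*$ at some $x^*$. If this minimum were positive, the nearest point $y\in T(x^*)\subset A^*$ would satisfy $\mathrm{dist}(y,T(y))\le\kappa(d(x^*,y))\,d(x^*,y)<\mathrm{dist}(x^*,T(x^*))$, by your Step 1 computation, contradicting minimality. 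Hence $x^*\in T(x^*)$. Both uses of compactness, the attained suprema in the estimate for $F$ and the minimization on $A^*$, break down for $CB(X)$-valued maps, and that is exactly what Theorem~\ref{MRS} exploits.
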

In 1974, Reich \cite{R74} (see also \cite[Theorem 9]{R83}) posed the following question for nonempty closed bounded values multi-valued mappings satisfying  conditions as in Theorem \ref{THMR72}.
\begin{ques}[\cite{R74}]\label{QR74}
	Let $(X,d)$ be a complete metric space and $T:X\to CB(X)$ be a multi-valued mapping satisfying \eqref{RC1} with  $\kappa: (0,\infty)\to [0,1)$ having the property \eqref{RC2}. Does $T$ have a fixed point in $X$?
\end{ques}
This question  is known as Reich's fixed point problem and it remains unsolved. However, some  partially affirmative answers have been obtained. In 1983, Mizoguchi and Takahashi \cite{MT89} obtained a fixed point theorem for $T:X\to CB(X)$ in which condition \eqref{RC2} is replaced by the stronger condition
$$\limsup_{s\to t^+} \kappa(s)<1\quad\text{for all }t\ge 0.$$
In 1995, Daffer and Kaneko \cite{DK95} gave an alternative and somewhat more straightforward proof for the theorem of Mizoguchi and Takahashi. A simpler proof of that theorem can be found in \cite{X00}. In 1996,  Daffer, Kaneko and Li \cite{DKL96} obtained another positive result when, in addition,  the function $\kappa$ satisfies 
$$\kappa(t)\le 1-at^{\sigma}$$ 
for  all sufficiently small positive $t$, where $a>0$ and $\sigma\in (0,1)$. Also in 1996, Chen \cite{C96} supplied a different sufficient hypothesis by  assuming the following additional condition on the considered mapping $T$:
\begin{itemize}
	\item[] If $Y$ is a closed subset of $X$ such that $T(x)\cap Y\ne \emptyset$ for all $x\in Y$, then $d(x,T(x)) =d(x,T(x)\cap Y)$ for all $x\in Y$.
\end{itemize}
Xu \cite{X00} also emphasized that this condition is restrictive and even constant mappings do not always satisfy it. In 2019, Aze and Corvellec \cite{AC19} proved that if $T$ satisfies  conditions in Question \ref{QR74}, then $\inf\{d(x,T(x)):x\in X\} =0$ and they also gave a partial answer to Question \ref{QR74} with an additional condition that $\kappa$ is nonincreasing and $(1-\kappa)^{-1}$ is summable near zero.

In 2009, Jachymski \cite{J09} posed the  following variant  of  Question \ref{QR74}, with a slightly stronger assumption.
\begin{ques}[\cite{J09}] 
	\label{QJ09}
	Let $(X,d)$ be a complete metric space and $T:X\to CB(X)$ be a multi-valued mapping such that
	\begin{equation}
		\label{JC1}
		H(T(x),T(y))\le \varphi(d(x,y)) \quad\text{for all } x,y\in X,
	\end{equation}
	where $\varphi:[0,\infty)\to [0,\infty)$ is increasing and right continuous and $\varphi(t)<t$ for all $t>0$. Does $T$ has a fixed point?
\end{ques}
Note that the condition for $T$ in Question \ref{QR74} is equivalent to \eqref{JC1} with a right upper semicontinuous function $\varphi:[0,\infty)\to [0,\infty)$ such that $\varphi(t)<t$ for all $t>0$ (see  \cite[Remark 7]{JJ07}). Here, $\varphi:[0,\infty)\to [0,\infty)$  is right upper semicontinuous if
$$\limsup_{s\to t^+}\varphi(s)\le \varphi(t)\quad\text{for all } t\ge 0.$$
Another version of Question \ref{QJ09} was raised before in \cite{J98}, where the function $\varphi$ is supposed to be continuous and increasing and $\varphi(t)<t$ for all $t>0$.  For more Jachymski's work related to Reich’s question, we refer the reader to, e.g., \cite{J95,J98}. 

Recently, D\~ung  \cite{D24} claimed an affirmative answer to Question \ref{QJ09} by proving the following theorem.
\begin{theorem}[\cite{D24}]
	\label{DTHM}
	Let $(X,d)$ be a complete metric space and $T:X\to CB(X)$ be a multi-valued mapping satisfying \eqref{JC1} with $\varphi:[0,\infty)\to [0,\infty)$ being right upper semicontinuous and increasing and $\varphi(t)<t$ for all $t>0$. Then, $T$ has a fixed point.
\end{theorem}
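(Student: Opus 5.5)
The plan is to construct an approximate Picard sequence adapted to closed bounded values, force the step lengths to zero using right upper semicontinuity, then prove the sequence is Cauchy and pass to the limit. The Cauchy step is where I expect the real difficulty.

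\emph{Construction.} Fix $x_0\in X$ and $x_1\in T(x_0)$, and put $d_n:=d(x_n,x_{n+1})$. If some $d_n=0$ then $x_n\in T(x_n)$ and we are done, so assume $d_n>0$ for all $n$. Since $x_{n+1}\in T(x_n)$, we have
$$\mathrm{dist}(x_{n+1},T(x_{n+1}))\le H(T(x_n),T(x_{n+1}))\le \varphi(d_n)<d_n.$$
$T(x_{n+1})$ is only closed, so the infimum need not be attained. Instead, pick $x_{n+2}\in T(x_{n+1})$ with
$$d_{n+1}<\varphi(d_n)+\varepsilon_n,\qquad \varepsilon_n:=\min\{2^{-n},\tfrac12(d_n-\varphi(d_n))\}>0.$$
Then $d_{n+1}<d_n$, so $d_n\downarrow r\ge 0$. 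Suppose $r>0$. Since $d_n\to r^+$ and $\varepsilon_n\to0$, right upper semicontinuity gives
$$r=\lim d_{n+1}\le \limsup\varphi(d_n)\le\varphi(r)<r,$$
a contradiction. Hence $d_n\to0$.

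\emph{Cauchy property (main obstacle).} Suppose $(x_n)$ is not Cauchy. The standard argument produces $\varepsilon>0$ and indices $m_k>n_k\to\infty$ with $d(x_{n_k},x_{m_k})\ge\varepsilon$ but $d(x_{n_k},x_{m_k-1})<\varepsilon$. Using $d_n\to0$ and the triangle inequality, this gives $d(x_{n_k},x_{m_k})\to\varepsilon^+$, and similarly for the shifted pairs $(x_{n_k+1},x_{m_k+1})$. One would then like
$$d(x_{n_k+1},x_{m_k+1})\le H(T(x_{n_k}),T(x_{m_k}))+\delta_k\le\varphi(d(x_{n_k},x_{m_k}))+\delta_k,$$
and conclude $\varepsilon\le\varphi(\varepsilon)$ by right upper semicontinuity and monotonicity of $\varphi$, contradicting $\varphi(\varepsilon)<\varepsilon$. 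The gap is the first inequality. The selection only controls $x_{m+1}$ relative to $x_m$, so nothing ties the distance between $x_{n+1}\in T(x_n)$ and $x_{m+1}\in T(x_m)$ to $H(T(x_n),T(x_m))$ when the values are not compact.

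To close this gap I would first try to redefine the sequence so that each new point is chosen near all earlier ones simultaneously. Concretely, I would choose $x_{n+1}$ in $T(x_n)$ almost realizing $\mathrm{dist}(y,T(x_n))$ for a fixed reference point $y$ in the tail. Failing that, I would replace the sequence by a Zorn/Brézis--Browder-type maximal element of the partial order $x\preceq y$ iff $y\in T(x)$ and $d(x,y)\le c\,(\mathrm{dist}(x,T(x))+\ldots)$. This is where Reich's compactness hypothesis in Theorem~\ref{THMR72} is normally used, and where I expect the argument to be hardest and possibly to fail.

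\emph{Conclusion.} Granting the Cauchy property, completeness gives $x_n\to x^*$. Then
$$\mathrm{dist}(x^*,T(x^*))\le d(x^*,x_{n+1})+H(T(x_n),T(x^*))\le d(x^*,x_{n+1})+\varphi(d(x_n,x^*)),$$
and the right-hand side tends to $0$ because $\varphi(t)<t$ and hence $\varphi(t)\to0$ as $t\to0^+$. Since $T(x^*)$ is closed, $x^*\in T(x^*)$.
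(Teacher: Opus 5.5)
Your proposal breaks exactly at the Cauchy step you flagged, and that gap cannot be closed, because the statement is false. The paper does not prove it; it refutes it. In Theorem~\ref{MRS}, $X=\{x\in c_0:\|x\|_\infty\le 1\}$ with the sup metric is complete, and
\[
T(x)=\{z\in X:|(n+1)z_n-nx_n|\ge 1 \text{ for all } n\}.
\]
Coordinatewise, $z_n$ ranges over $\Lambda_{\alpha_n}(x_n)$ with $\alpha_n=1/(n+1)$. Lemma~\ref{Lem} gives
\[
H_{\mathbb{R}}(\Lambda_{\alpha_n}(x_n),\Lambda_{\alpha_n}(y_n))=\min\{(1-\alpha_n)|x_n-y_n|,\alpha_n\}\le \frac{d(x,y)}{1+d(x,y)}.
\]
Choosing nearest points coordinatewise stays in $c_0$, since the $n$-th correction is at most $\alpha_n\to0$. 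This yields \eqref{JC1} with $\varphi(t)=t/(1+t)$, which is continuous, increasing, and satisfies $\varphi(t)<t$ for $t>0$. Yet $x\in T(x)$ would force $|x_n|\ge 1$ for every $n$, which is impossible in $c_0$. The values $T(x)$ are closed and bounded but never compact. That is precisely the hypothesis of Theorem~\ref{THMR72} you identified as the missing ingredient.

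Your first step ($d_n\to0$ via right upper semicontinuity) and your last step (a limit of the sequence is a fixed point) are both correct. So in this example every sequence your construction produces must fail to be Cauchy. Part (v) of Theorem~\ref{MRS} shows this directly. Every $A_0$ with $\mathrm{diam}\,A_0\le r<1$ has $\inf_{x\in A_0}\mathrm{dist}(x,T(x))>0$, whereas a Cauchy tail would have small diameter and $\mathrm{dist}(x_n,T(x_n))\le d_n\to0$. Hence no reselection rule can rescue the argument, whether a reference point in the tail or a Br\'ezis--Browder/Zorn maximal element. The localization such a rule needs is exactly the false assertion (AS) in D\~ung's proof.

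Concretely, start from $x_0=0$ and take exact nearest points. This gives $x_k=\bigl(1-(\tfrac{n}{n+1})^k\bigr)_{n\ge1}$, and:
\begin{itemize}
\item $(n+1)x_{k+1,n}-nx_{k,n}=1$, so each step is admissible;
\item $d_k=\frac{1}{k+1}\bigl(\frac{k}{k+1}\bigr)^k$ is of order $1/k$, so $d_k\to0$ but $\sum d_k=\infty$;
\item $\|x_k-x_m\|_\infty\to1$ as $m\to\infty$ for each fixed $k$, since the iterates drift coordinatewise to $(1,1,\dots)\notin c_0$.
\end{itemize}
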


However, the proof of Theorem \ref{DTHM} in \cite{D24}  contains some gaps.  For instance, on page 325, the proof asserts that
\begin{itemize}
	\item[(AS)] the boundedness of $A$, together with $\inf\{\mathrm{dist}(x,T(x)):x\in A\}$, yields the existence of a set $A_0\subset A$ with a prescribed small diameter such that  $\inf\{\mathrm{dist}(x,T(x)):x\in A_0\}$. 
\end{itemize}
Unfortunately, this assertion is not true and hence the proof does not establish Theorem \ref{DTHM} as stated.

In this paper, we present a counterexample to Theorem \ref{DTHM} and therefore provide a negative answer to Reich's fixed poin problem.

\section{A counterexample}

Before providing a negative answer to the Reich's fixed point problem, we present here a technical lemma which will be used in proving the main result.
\begin{lemma}
	\label{Lem}
	Let $\alpha \in (0,1/2]$. For $t\in [-1,1]$, define
	\begin{equation}
		\label{EqLem}
		\Lambda_\alpha(t)=\{\lambda\in [-1,1]: |\lambda- (1-\alpha)t|\ge \alpha\}.
	\end{equation}
	Then, 
	\begin{equation}
		\label{IneqLem}
		H_{\mathbb{R}}(\Lambda_\alpha(s),\Lambda_\alpha(t)) = \min\{(1-\alpha)|s-t|,\alpha\},
	\end{equation}
	for all $t,s\in [-1,1]$.
\end{lemma}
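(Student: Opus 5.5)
The plan is to describe $\Lambda_\alpha(t)$ explicitly and then compute the two excesses by elementary one-dimensional geometry. Write $c=(1-\alpha)t\in[-(1-\alpha),1-\alpha]$. Then
$$\Lambda_\alpha(t)=[-1,c-\alpha]\cup[c+\alpha,1].$$
Both pieces are nonempty, since $c-\alpha\ge -1$ and $c+\alpha\le 1$. So $\Lambda_\alpha(t)$ is $[-1,1]$ with the open gap $(c-\alpha,c+\alpha)$ of length $2\alpha$ removed. The gap is centred at $c$, and $\alpha\le 1/2$ guarantees it lies inside $[-1,1]$.

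Fix $s,t$ and put $a=(1-\alpha)s$, $b=(1-\alpha)t$, $\delta=|a-b|=(1-\alpha)|s-t|$. By symmetry it suffices to compute $e(\Lambda_\alpha(s),\Lambda_\alpha(t))$. For any point $x\in[-1,1]$, the distance to $\Lambda_\alpha(t)$ is $0$ unless $x$ lies in the gap $(b-\alpha,b+\alpha)$. In that case it equals $\min\{x-(b-\alpha),\,(b+\alpha)-x\}=\alpha-|x-b|$; the endpoints $b\pm\alpha$ belong to $\Lambda_\alpha(t)$ because they lie in $[-1,1]$. Hence
$$e(\Lambda_\alpha(s),\Lambda_\alpha(t))=\sup\{\alpha-|x-b|: x\in\Lambda_\alpha(s),\ |x-b|<\alpha\},$$
with the convention that the supremum of the empty set is $0$. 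In other words, we need the infimum of $|x-b|$ over $x\in\Lambda_\alpha(s)$, that is, $\mathrm{dist}(b,\Lambda_\alpha(s))$, truncated at $\alpha$. This gives
$$e=\max\{0,\alpha-\mathrm{dist}(b,\Lambda_\alpha(s))\}.$$

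Next, $\mathrm{dist}(b,\Lambda_\alpha(s))$ is computed by the same formula. If $|b-a|\ge\alpha$, then $b\in\Lambda_\alpha(s)$, so the distance is $0$ and $e=\alpha$. If $|b-a|<\alpha$, the distance is $\alpha-|b-a|=\alpha-\delta$, so $e=\delta$. Combining the two cases, $e=\min\{\delta,\alpha\}$. The same value is obtained for the reverse excess, so
$$H_{\mathbb R}(\Lambda_\alpha(s),\Lambda_\alpha(t))=\min\{(1-\alpha)|s-t|,\alpha\}.$$

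The main point needing care is the boundary bookkeeping. First, the gap must not reach past $\pm1$, so that the nearest points of $\Lambda_\alpha(t)$ to a gap point really are the endpoints $b\pm\alpha$. This is exactly where $\alpha\le1/2$ is used: it gives $(1-\alpha)+\alpha=1$. Second, the supremum over $x\in\Lambda_\alpha(s)$ must actually be attained near $b$. The closest point of $\Lambda_\alpha(s)$ to $b$ is the endpoint $a\pm\alpha$ on the side of $b$, and this endpoint lies in $[-1,1]$ for the same reason. I will verify these endpoint memberships explicitly. The rest is routine.
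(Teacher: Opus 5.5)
Your argument is correct, but it is organized differently from the paper's.

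The paper splits the claim into two inequalities.
The upper bound comes from inserting the triangle inequality $|\lambda-b_t|\ge|\lambda-b_s|-|b_s-b_t|$ into $\mathrm{dist}(\lambda,\Lambda_\alpha(t))=\max\{0,\alpha-|\lambda-b_t|\}$.
The lower bound comes from explicit witnesses: $\lambda_0=b_s+\alpha$ when $0\le b_t-b_s\le\alpha$, and $\lambda_0=b_t$ when $b_t-b_s\ge\alpha$.

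You instead pass the supremum through the continuous nonincreasing map $u\mapsto\max\{0,\alpha-u\}$. This gives the exact identity $e(\Lambda_\alpha(s),\Lambda_\alpha(t))=\max\{0,\alpha-\mathrm{dist}(b_t,\Lambda_\alpha(s))\}$. You then evaluate $\mathrm{dist}(b_t,\Lambda_\alpha(s))$ with the same one-dimensional distance formula. This computes each excess exactly, with no separate upper and lower estimates, and it shows directly that the two excesses coincide.

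The paper's witness points are exactly the nearest points of $\Lambda_\alpha(s)$ to $b_t$, which your computation uses implicitly. So the paper's version shows where the Hausdorff distance is attained, while yours is shorter.

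One small correction to your commentary. The bound $|c|+\alpha\le(1-\alpha)+\alpha=1$ holds for every $\alpha\in(0,1)$, so $\alpha\le 1/2$ is not what keeps the gap inside $[-1,1]$. In fact neither your proof nor the paper's uses $\alpha\le 1/2$ at all. That restriction only reflects the later application with $\alpha_n=1/(n+1)$.
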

\begin{proof} For each $t\in [-1,1]$, set $b_t = (1-\alpha)t$. Then, by \eqref{EqLem},   
	$$\Lambda_\alpha(t) =\{\lambda\in [-1,1]: |\lambda- b_t|\ge \alpha\}= [-1,b_t-\alpha] \cup [b_t+\alpha,1].$$
	For any $\lambda\in [-1,1]$, we have
	$$
	\mathrm{dist}(\lambda, \Lambda_\alpha(t))= 
	\begin{cases}
		0, & \text{ if } \left|\lambda-b_t\right| \geq \alpha, \\ 
		\min\left\{\lambda-\left(b_t-\alpha\right),\left(b_t+\alpha\right)-\lambda\right\}, &\text{ if }  \left|\lambda-b_t\right|<\alpha .
	\end{cases}
	$$
	Equivalently,
	$$
	\mathrm{dist}(\lambda, \Lambda_\alpha(t))= \max\{0,\alpha - |\lambda-b_t|\}.
	$$
	It follows that $\mathrm{dist}(\lambda, \Lambda_\alpha(t)) \le \alpha$ for any $\lambda\in [-1,1]$ and $t\in [-1,1]$.
	
	Now, let $t,s\in [-1,1]$. We first show that
	\begin{equation}
		\label{E1}
		H_{\mathbb{R}}(\Lambda_\alpha(s),\Lambda_\alpha(t)) \le \min\{(1-\alpha)|t-s|,\alpha\}.
	\end{equation}
	Let $\lambda\in \Lambda_\alpha(s)$. Then, $|\lambda-b_s|\ge \alpha$. Using the triangle inequality, 
	$$|\lambda - b_t|\ge |\lambda - b_s| - |b_s-b_t|\ge \alpha -(1-\alpha)|t-s|.$$
	Thus,
	$$\mathrm{dist}(\lambda,\Lambda_\alpha(t)) = \max\{0,\alpha - |\lambda-b_t|\} \le (1-\alpha)|t-s|.$$
	Hence,
	$$\mathrm{dist}(\lambda,\Lambda_\alpha(t))\le \min\{(1-\alpha)|t-s|,\alpha\},$$
	for every $\lambda\in \Lambda_\alpha(s)$. Thus,
	$$e(\Lambda_\alpha(s),\Lambda_\alpha(t)) = \sup_{\lambda\in \Lambda_\alpha(s)}\mathrm{dist}(\lambda,\Lambda_\alpha(t))\le \min\{(1-\alpha)|t-s|,\alpha\}.$$
	Exchanging $s$ and $t$, we also get $e(\Lambda_\alpha(t),\Lambda_\alpha(s))\le \min\{(1-\alpha)|t-s|,\alpha\}$. Therefore,  \eqref{E1} has been proved.
	
	To show the reverse inequality of \eqref{E1}, we assume without loss of generality that $s\le t$. Then, $b_s\le b_t$.  We have two cases.
	
	\textbf{\textit{Case 1.}}  $b_t-b_s\le \alpha$. In this case, taking $\lambda_0 = b_s +\alpha \in \Lambda_\alpha(s)$, we have
	\begin{align*}
		\mathrm{dist}(\lambda_0,\Lambda_\alpha(t)) &= \min\{\lambda_0-(b_t-\alpha),(b_t+\alpha)-\lambda_0\}\\
		&=\min\{2\alpha -(b_t-b_s), b_t-b_s\}\\
		&= b_t-b_s = (1-\alpha)|t-s|.
	\end{align*}
	Thus, $H(\Lambda_\alpha(s),\Lambda_\alpha(t))\ge (1-\alpha)|t-s|$.
	
	\textbf{ \textit{Case 2.}} $b_t-b_s\ge \alpha$. In this case, taking $\lambda_0 =b_t$, we have $\lambda_0=b_t\ge b_s+\alpha$. Hence, $\lambda_0 \in \Lambda_\alpha(s)$ and
	$$\mathrm{dist}(\lambda_0,\Lambda_\alpha(t))= \alpha.$$
	Thus, $H(\Lambda_\alpha(s),\Lambda_\alpha(t))\ge \alpha$.
	
	Combining both cases, we have
	\begin{equation}
		\label{E2}
		H(\Lambda_\alpha(s),\Lambda_\alpha(t))\ge \min\{(1-\alpha)|t-s|,\alpha\}.
	\end{equation}
	Therefore, \eqref{EqLem} follows from \eqref{E1} and \eqref{E2}. The proof is complete.
\end{proof} 

The main result of this note is state as follows.
\begin{theorem}
	\label{MRS}
	Let $X = \{x = \{x_n\}\in c_0:  ||x||_{\infty}\le 1\}$  equiped with the metric $d(x,y) = ||x-y||_{\infty}$ for all $x,y \in X$ and let $T:X\rightrightarrows X$ be defined by
	\begin{equation}
		\label{DefT}
		T(x) = \left\{z=\{z_n\}\in X:  \,\left|(n+1)z_n - nx_n  \right| \ge 1\,\,\text{ for all }\,n\ge 1    \right\}
	\end{equation}
	for all $x = \{x_n\}\in X$. Then, the following assertions hold.
	\begin{itemize}
		\item[\rm (i)]  $(X,d)$ is a complete metric space.
		\item[\rm (ii)] $T(x)\in CB(X)$ for all $x\in X$.
		\item[\rm(iii)] For  every $x\in X$, the set $T(x)$ is not compact.
		\item[\rm(vi)] For all $x,y\in X$,
		\begin{equation}
			\label{IneqH}
			H(T(x),T(y)) \le \frac{d(x,y)}{1+d(x,y)}.
		\end{equation}
		\item[\rm (v)]  $\inf\{\mathrm{dist}(x,T(x)):x\in X\} = 0$ and   $\inf\{\mathrm{dist}(x,T(x)):x\in A_0\} > 0$ for any $r\in (0,1)$ and for any nonempty subset $A_0$ of $X$ with $\mathrm{diam}A_0\le r$.
		\item[\rm (vi)] $T$ is fixed-point free in $X$. 
	\end{itemize}
\end{theorem}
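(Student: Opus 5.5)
The whole argument rests on one observation. Put $\alpha_n=1/(n+1)\in(0,1/2]$, so that $1-\alpha_n=n/(n+1)$. Then
$$|(n+1)z_n-nx_n|\ge 1 \iff \Big|z_n-\tfrac{n}{n+1}x_n\Big|\ge \tfrac1{n+1}\iff z_n\in\Lambda_{\alpha_n}(x_n),$$
with $\Lambda_\alpha$ as in Lemma \ref{Lem}. Hence $T(x)=\{z\in X: z_n\in\Lambda_{\alpha_n}(x_n)\ \forall n\}$, and every assertion reduces to a coordinatewise computation plus a check that the sequences we build stay in $c_0$. Write $b_n=(1-\alpha_n)x_n$.

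\textbf{Steps (i)--(iii).}
\begin{itemize}
\item[(i)] $X$ is the closed unit ball of the Banach space $c_0$, hence complete.
\item[(ii)] Each condition $|(n+1)z_n-nx_n|\ge1$ is closed under coordinatewise convergence, and sup-norm convergence implies coordinatewise convergence, so $T(x)$ is closed. It is bounded since $T(x)\subset X$. For nonemptiness take $w_n=b_n+\alpha_n$. Since $|b_n|\le 1-\alpha_n$, we have $w_n\in[-1,1]$ and $w_n\in\Lambda_{\alpha_n}(x_n)$. Since $b_n\to0$ and $\alpha_n\to0$, $w\in c_0$.
\item[(iii)] Note that $1\in\Lambda_{\alpha_k}(x_k)$ always, because $b_k+\alpha_k\le1$. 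Define $z^k$ equal to $w$ except $z^k_k=1$; then $z^k\in T(x)$. For $j>k$ large we get $\|z^k-z^j\|_\infty\ge|1-w_k|$, which is at least $1/2$ once $w_k$ is small. So $(z^k)$ has no Cauchy subsequence, and $T(x)$ is not compact.
\end{itemize}

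\textbf{Step (iv).} It suffices to bound $e(T(x),T(y))$. Let $z\in T(x)$ and set $d=d(x,y)$. For each $n$ choose $u_n$ to be a nearest point of $\Lambda_{\alpha_n}(y_n)$ to $z_n$. This set is a union of two closed intervals, so the nearest point exists. By the proof of Lemma \ref{Lem},
$$|u_n-z_n|\le\min\{(1-\alpha_n)|x_n-y_n|,\alpha_n\}\le \frac{\min\{nd,1\}}{n+1}.$$
Since $|u_n-z_n|\le\alpha_n\to0$, we get $u\in c_0$, hence $u\in T(y)$. Finally, $\min\{nd,1\}/(n+1)\le d/(1+d)$ by two cases:
\begin{itemize}
\item if $nd\le1$, the inequality is equivalent to $n(1+d)\le n+1$, i.e. $nd\le 1$;
\item if $nd\ge 1$, it is equivalent to $1+d\le (n+1)d$, i.e. $nd\ge 1$.
\end{itemize}
Taking the supremum over $n$, and using symmetry in $x,y$, gives \eqref{IneqH}.

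\textbf{Steps (v)--(vi).} From the formula $\mathrm{dist}(\lambda,\Lambda_\alpha(t))=\max\{0,\alpha-|\lambda-b_t|\}$ in the proof of Lemma \ref{Lem}, with $\lambda=x_n$ we get $|x_n-b_n|=\alpha_n|x_n|$ and hence
$$\mathrm{dist}(x,T(x))=\sup_n \alpha_n(1-|x_n|).$$
This identity is the main point to justify carefully. The bound $\ge$ is immediate from coordinates. For $\le$ one must realise the coordinatewise nearest points by an element of $c_0$, which again holds because each displacement is at most $\alpha_n\to0$.

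With the formula in hand:
\begin{itemize}
\item Taking $x$ with $x_n=1$ for $n\le N$ and $x_n=0$ otherwise gives $\mathrm{dist}(x,T(x))=1/(N+2)\to0$, so the infimum over $X$ is $0$.
\item For (vi): since $x_n\to0$, the terms $\alpha_n(1-|x_n|)$ are positive for large $n$. Thus $\mathrm{dist}(x,T(x))>0$, and because $T(x)$ is closed, $x\notin T(x)$.
\item For the second part of (v): fix $x^0\in A_0$ and choose $N$ with $|x^0_N|\le(1-r)/2$. Every $x\in A_0$ satisfies $|x_N|\le(1+r)/2$. Hence $\mathrm{dist}(x,T(x))\ge\alpha_N(1-r)/2>0$ uniformly on $A_0$.
\end{itemize}
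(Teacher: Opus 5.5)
Your proposal is correct and takes essentially the same route as the paper: you rewrite $T(x)$ as $\{z\in X: z_n\in\Lambda_{\alpha_n}(x_n)\ \forall n\}$, pick coordinatewise nearest points and apply Lemma~\ref{Lem} for (iv), and use the exact formula $\mathrm{dist}(x,T(x))=\sup_n \alpha_n(1-|x_n|)$ for (v). The differences are cosmetic: the single choice $w_n=b_n+\alpha_n$ for nonemptiness, restricting to large $k$ in (iii), and deducing (vi) from the distance formula (where closedness of $T(x)$ is not actually needed, since $\mathrm{dist}(x,T(x))>0$ already excludes $x\in T(x)$).
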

\begin{proof} (i) Since $(c_0,d)$ is a complete metric space and $X$ is a closed subset of $c_0$, the metric space $(X,d)$ is complete.
	
	(ii)  Let $x = \{x_n\} \in X$. We first show that $T(x)\ne \emptyset$. Define $z=(z_n)$ by 
	\begin{equation}
		\label{ETH0}
		z_n= 
		\begin{cases}
			0, & \text{ if } \left|x_n\right| \geq \dfrac{1}{n}, \\[8pt]
			\dfrac{1}{n+1}(nx_n-1), & \text{ if } 0 \leq x_n<\dfrac{1}{n}, \\[8pt]
			\dfrac{1}{n+1}(nx_n+1),  & \text{ if }  -\dfrac{1}{n}<x_n<0 .
		\end{cases}
	\end{equation}
	If $|x_n| <1/n$, then 
	$$|z_n| \le  \frac{n|x_n|+1}{n+1}<\frac{2}{n+1},$$
	Hence, 
	$$|z_n|<\frac{2}{n+1}\quad\text{ for all } n\ge 1.$$
	It follows that $z_n\to0$ as $n\to\infty$ and
	$$||z||_{\infty} \le \sup_{n\ge 1} \frac{2}{n+1} = 1.$$
	Thus, $z\in X$. 
	
	We next show that $z\in T(x)$. Indeed, if $|x_n|\ge 1/n$, then
	$$\left|(n+1)z_n-nx_n\right| = n|x_n| \ge n\cdot \frac{1}{n} = 1.$$
	If $0\le x_n <1/n$, then
	$$\left|(n+1)z_n-nx_n\right| = \left|(nx_n-1)-nx_n\right| = 1.$$
	If $-1/n<x_n<0$, then 
	$$\left|(n+1)z_n-nx_n\right| = \left|(nx_n+1)-nx_n\right| = 1.$$
	So, in all cases, we have
	$$\left|(n+1)z_n-nx_n\right|\ge  1$$
	for all $n\ge 1$. Thus, $z\in T(x)$ and therefore $T(x)$ is nonempty.
	
	Assume now that $z^{(k)} = \{z_n^{(k)}\} \in T(x)$ and $z^{(k)}$ converges to some $z = \{z_n\}$ in $X$.  Since $z^{(k)}\in T(x)$, we have for all $n\ge 1$ that
	$$\left|(n+1)z_n^{(k)} - nx_n\right| \ge 1.$$
	Since $z^{k}\to z$, we have $z_n^{(k)}\to z_n$ as $k\to\infty$ for all $n\ge 1$. Taking the limit as $k\to\infty$ on both sides of the latter ine quality, we get
	$$\left|(n+1)z_n - nx_n\right| \ge 1.$$
	Thus, $z\in T(x)$ and therefore $T(x)$ is closed.
	
	Moreover,  for every $z,z'\in T(x)$, we have
	$$d(z,z')=||z-z'||_{\infty}\le ||z||_{\infty} +||z'||_{\infty}\le 2.$$
	Thus, $T(x)$ is bounded. So, we have verified that $T(x)\in CB(X)$ for all $x\in X$.
	
	(iii) Let $x=\{x_n\}\in X$ and define $z=\{z_n\}$ as in \eqref{ETH0}. Then, $z\in T(x)$. Moreover,
	\begin{equation}
		\label{ETH01} 
		|z_n|\le \frac{1}{n+1}
	\end{equation}
	for all $n\ge 1$. Indeed, if  $|x_n|\ge 1/n$, then $z_n=0$ and \eqref{ETH01} holds. If $0\le x_n<1/n$, then
	$ -1/(n+1)\le z_n<0$   and \eqref{ETH01} holds.  If  $-1/n<x_n<0$, then
	$0<z_n<1/(n+1)$ and \eqref{ETH01} holds. Therefore, \eqref{ETH01} holds for all $n\ge 1$.
	
	Now, for each $k\ge 1$,  let $z^{(k)}$ be the sequence obtained by replacing  the $k$th coordinate of $z$ by $1$, leaving all other coordinates unchanged. Then, $z^{(k)}\in X$ for all $k$. Moreover, for each $k\ge 1$,
	$$|(k+1)z_k^{(k)} - kx_k| = |k+1-kx_k|\ge k+1 - k|x_k|\ge 1.$$
	Hence, $z^{(k)}\in T(x)$ for all $k$. For distinct $k$ and $n$,
	$$d(z^{(k)},z^{(n)}) \ge |1-z_n| \ge 1-\frac{1}{n+1}\ge \frac{1}{2}.$$
	Thus, $\{z^{(k)}\}$ has no Cauchy subsequence. So, $T(x)$ is not compact.

	(iv) For each $n$, set $\alpha_n = 1/(n+1)$. Then, $\alpha_n\in (0,1/2]$ for all $n\ge 1$ and
	\begin{align*}
		T(x)&=\left\{ z = \{z_n\}\in X: \left|z_n - \frac{n}{n+1}x_n\right|\ge \frac{1}{n+1}\,\,\text{ for all }\, n\ge 1\right\}\\
		&= \left\{z=\{z_n\}\in X: |z_n-(1-\alpha_n)x_n|\ge \alpha_n \,\,\text{ for all }\, n\ge 1\right\}\\
		&= \left\{z=\{z_n\}\in X: z_n \in \Lambda_{\alpha_n}(x_n) \,\,\text{ for all }\, n\ge 1\right\}.
	\end{align*}
	Let $x= \{x_n\}, y =\{y_n\}\in X$ and take an arbitrary element $u = \{u_n\}\in T(x)$. Since each set $\Lambda_{\alpha_n}(y_n)$ is a nonempty compact subset of $\mathbb{R}$, there exists $v_n\in \Lambda_{\alpha_n}(y_n)$ such that
	$$|u_n-v_n| = \mathrm{dist}(u_n,\Lambda_{\alpha_n}(y_n)).$$
	Since $u_n \in \Lambda_{\alpha_n}(x_n)$, by Lemma \ref{Lem}, we have
	\begin{align}
		\label{ETH1}
		|u_n-v_n|&=\mathrm{dist}(u_n,\Lambda_{\alpha_n}(y_n))\nonumber\\
		&\le H_{\mathbb{R}}(\Lambda_{\alpha_n}(x_n),\Lambda_{\alpha_n}(y_n))\nonumber\\
		&=\min\{(1-\alpha_n)|x_n-y_n|,\alpha_n\}\nonumber\\
		&\le \min\{(1-\alpha_n)d(x,y),\alpha_n\}.
	\end{align}
	It follows that
	$$|v_n|\le |u_n| + |v_n-u_n| \le |u_n| + \alpha_n.$$
	Since $u=\{u_n\}\in T(x)$, so $u_n\to 0$, and $a_n\to 0$, we have $v_n\to 0$ as $n\to\infty$.  Moreover, since $v_n\in \Lambda_{\alpha_n}(y_n)\subset [-1,1]$, one has $||v||_{\infty}\le 1$. Consequently, $v=\{v_n\}\in X$ and therefore $v\in T(y)$.
	
	Furthermore, for each $n$, we have
	\begin{equation}
		\label{ETH2}
		\min\{(1-\alpha_n)d(x,y),\alpha_n\}\le \frac{d(x,y)}{1+d(x,y)}.
	\end{equation}
	Indeed, if $\alpha_n \le d(x,y)/[1+d(x,y)]$, then \eqref{ETH2} holds. Otherwise, $\alpha_n >d(x,y)/[1+d(x,y)]$, then 
	$$(1-\alpha_n)d(x,y)< \left[1- \frac{d(x,y)}{1+d(x,y)}\right]d(x,y) = \frac{d(x,y)}{1+d(x,y)}.$$
	and hence \eqref{ETH2} holds. This proves  \eqref{ETH2}.
	
	Now,
	$$||u-v||_{\infty}=\sup_{n}|u_n-v_n| \le \sup_{n} \min\{(1-\alpha_n)d(x,y),\alpha_n\} \le \frac{d(x,y)}{1+d(x,y)}.$$
	Since $v\in T(y)$, we have
	$$\mathrm{dist}(u,T(y))\le \inf_{z\in T(y)}||u-z||_{\infty}\le ||u-v||_{\infty}\le \frac{d(x,y)}{1+d(x,y)}.$$
	As $u\in T(x)$ is arbitrary, 
	$$e(T(x),T(y)) = \sup_{u\in T(x)}\mathrm{dist}(u,T(y))\le \frac{d(x,y)}{1+d(x,y)}.$$
	Repeating the argument with $x,y$ interchanged, we also get
	$$e(T(y),T(x))\le \frac{d(x,y)}{1+d(x,y)}.$$
	Thus,
	$$H(T(x),T(y))\le \frac{d(x,y)}{1+d(x,y)}.$$
	
	(v) For any $x=\{x_n\}\in X$ and $z=\{z_n\}\in T(x)$, we have for all $n\ge 1$ that 
	\begin{align*}
		1\le |(n+1)z_n-nx_n|&\le (n+1)|x_n-z_n| + |x_n|\\
		&\le (n+1)||x-z||_{\infty}+|x_n|.
	\end{align*}
	It follows that
	$$||x-z||_{\infty}\ge \frac{1-|x_n|}{n+1}\quad\text{for all } n\ge 1.$$
	Taking the infimum over $z\in T(x)$, we get
	\begin{equation}
		\label{ETH3}
		\mathrm{dist}(x,T(x))\ge \frac{1-|x_n|}{n+1}
	\end{equation}
	Now let $x= \{x_n\}\in X$ and define  $z=\{z_n\} $  by
	$$
	z_n= 
	\begin{cases}
		\dfrac{nx_n-1}{n+1}, & \text{ if } x_n\ge 0, \\[8pt]
		\dfrac{nx_n+1}{n+1},  & \text{ if }  x_n<0 .
	\end{cases}
	$$
	Then, $z_n\in \Lambda_{\alpha_n}(x_n)$  and
	$$|z_n| \le \frac{n}{n+1}|x_n|+\frac{1}{n+1},$$
	for all $n\ge 1$. Since $x\in c_0$, $x_n\to 0$ as $n\to\infty$. It follows from the latter inequality that $z_n\to 0$ as $n\to \infty$.  Moreover, $|x_n|\le 1$ for all $n\ge 1$, we have $|z_n|\le 1$ for all $n\ge 1$ and thus $||z||_{\infty}\le 1$.  Therefore, $z\in T(x)$. Moreover, if  $x_n\ge 0$, then  
	$$|x_n-z_n|=\left|x_n - \frac{nx_n+1}{n+1}\right| = \frac{1-x_n}{n+1} = \frac{1-|x_n|}{n+1},$$
	and  if  $x_n<0$, then   
	$$|x_n-z_n|=\left|x_n - \frac{nx_n-1}{n+1}\right| = \frac{1+x_n}{n+1} = \frac{1-|x_n|}{n+1}.$$
	Thus,
	$$|x_n-z_n| = \frac{1-|x_n|}{n+1}\quad\text{ for all } n\ge 1,$$
	and hence
	$$||x-z||_{\infty} = \sup_{n\ge 1} \frac{1-|x_n|}{n+1}.$$
	Consequently,
	\begin{equation}
		\label{ETH4}
		\mathrm{dist}(x,T(x)) \le \sup_{n\ge 1} \frac{1-|x_n|}{n+1}.
	\end{equation}
	From \eqref{ETH3} and \eqref{ETH4}, we have
	\begin{equation}
		\label{ETH5}
		\mathrm{dist}(x,T(x)) = \sup_{n\ge 1} \frac{1-|x_n|}{n+1} \quad\text{ for all } x\in X.
	\end{equation}
	Consider the sequence $\{x^{(k)}\}$ in $X$ defined by
	$$
	x_n^{(k)}= 
	\begin{cases}
		1, & \text{ if } n\le k, \\ 
		0,  & \text{ if }  n>k .
	\end{cases}
	$$
	Then,  using  \eqref{ETH5}, we have
	$$ \mathrm{dist}(x^{(k)},T(x^{(k)})) =  \sup_{n>k}\frac{1}{n+1} = \frac{1}{k+2} \to 0\, \text{ as } \,k\to\infty.$$
	Thus,
	$$\inf\{\mathrm{dist}(x,T(x)):x\in X\} = 0.$$
	We now fix $r\in (0,1)$ and any nonempty subset $A_0$ of $X$ with $\mathrm{diam}A_0\le r$.  Take $a=\{a_n\}\in A_0$. Since $a_n\in c_0$, $a_n\to 0$ as $n\to\infty$. Then, there exists $n_0\ge 1$ such that
	$$|a_{n_0}|<\frac{1-r}{2}.$$
	For any $x=\{x_n\}\in A_0$, we have
	\begin{align*}
		|x_{n_0}|&\le |x_{n_0}-a_{n_0}|+ |a_{n_0}|\le ||x-a||_{\infty}+|a_{n_0}|\\
		&<r+ \frac{1-r}{2} = \frac{1+r}{2}.
	\end{align*}
	By \eqref{ETH5}, we have
	$$d(x,T(x))= \sup_{n\ge 1}\frac{1-|x_n|}{n+1}\ge \frac{1-|x_{n_0}|}{n_0+1} >\frac{1-r}{n_0+1}.$$
	Thus,
	$$\inf\{\mathrm{dist}(x,T(x)):x\in A_0\} \ge  \frac{1-r}{n_0+1}>0.$$
	(vi)  Assume that $T$ has a fixed point $x= \{x_n\}\in X$. Then, $x\in T(x)$ and hence  $|(n+1)x_n - nx_n|\ge 1$ or, equivalently, $|x_n|\ge 1$ for all $n\ge 1$. This sequence $\{x_n\}$ cannot converge to zero which contradicts $x\in c_0$. Therefore, $T$ is fixed-point free in $X$.
\end{proof} 
\begin{remark} The mappping $T$ in Theorem \ref{MRS} satisfies all conditions in Theorem \ref{DTHM} with 
	$$\varphi(t) = \frac{t}{1+t}\quad\text{ for all } t\ge 0.$$
	However, the assertion (AS) is not true for $A=X$ and $T$ has no fixed point.
\end{remark}
The following corollary provides a negative answer to Question \ref{QR74}.
\begin{corollary} There exist a complete metric space $(X,d)$, a function $\kappa: (0,\infty) \to [0,1)$ with 
	$$\limsup_{s\to t^+}\kappa(s)<1\quad\text{ for all } t\in (0,\infty),$$ 
	and a mapping $T:X\to CB(X)$ satisfying
	$$H(T(x),T(y)) \le \kappa(d(x,y))d(x,y) \quad\forall x,y\in X, x\ne y,$$
	such that $\mathrm{Fix}(T) =\emptyset$. 
\end{corollary}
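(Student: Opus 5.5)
The corollary follows almost directly from Theorem \ref{MRS}. I will take $X=\{x\in c_0:\|x\|_\infty\le 1\}$ with the sup metric and $T$ as in \eqref{DefT}. Parts (i), (ii) and (vi) of Theorem \ref{MRS} already give three of the requirements: $(X,d)$ is complete, $T$ maps $X$ into $CB(X)$, and $\mathrm{Fix}(T)=\emptyset$. What remains is to find a suitable $\kappa$.

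Define
$$\kappa(t)=\frac{1}{1+t},\quad t>0.$$
Then $\kappa(t)\in(0,1)\subset[0,1)$ for every $t>0$. For $x\ne y$, put $t=d(x,y)>0$. Inequality \eqref{IneqH} from part (iv) gives
$$H(T(x),T(y))\le \frac{t}{1+t}=\kappa(t)\,t,$$
which is exactly the contractive condition required.

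It remains to check \eqref{RC2}. The function $\kappa$ is continuous on $(0,\infty)$, so for every $t>0$,
$$\limsup_{s\to t^+}\kappa(s)=\kappa(t)=\frac{1}{1+t}<1.$$
Note that this fails as $t\to 0^+$, since the limit there is $1$. This is consistent with the Mizoguchi--Takahashi theorem, which assumes the condition also at $t=0$.

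There is no real obstacle in the corollary itself; all the substance lies in part (iv) of Theorem \ref{MRS}. There, Lemma \ref{Lem} is applied coordinatewise, and one must check that the coordinatewise nearest points $v_n$ form a sequence in $c_0$. This holds because $\alpha_n\to 0$.
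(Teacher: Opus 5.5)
Your proposal is correct and is the argument the paper intends. The paper gives no separate proof of the corollary, but its Remark takes $\varphi(t)=t/(1+t)$, i.e.\ $\kappa(t)=1/(1+t)$. The corollary then follows from parts (i), (ii), (iv) and (vi) of Theorem \ref{MRS} exactly as you describe, with continuity of $\kappa$ giving $\limsup_{s\to t^+}\kappa(s)=1/(1+t)<1$ for every $t>0$.
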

 \subsection*{Acknowledgements} During the preparation of this work, the author used OpenAI ChatGPT for detecting gaps in the proof of the main result in \cite{D24}, literature search, and assistance in conceptualizing the example. All literature identified with AI assistance was independently checked against the original sources, and all AI-assisted mathematical arguments  were independently reviewed, verified, and revised by the author. The author takes full responsibility for the accuracy, originality, and integrity of the final manuscript. The authors would like to thank the members of the Mathematics Research Group at Hong Duc University for their fruitful discussions and valuable comments during the preparation of this work.
 \subsection*{Data availability} Data sharing not applicable to this article as no datasets were generated or analysed during the current study.
 \subsection*{Declarations}
 \subsection*{Conflict of interest} The authors declare that they have no conflict of interest.
 

\end{document}